\newtheorem*{thm*}{Theorem}
\newtheorem*{conj*}{Conjecture}
\newtheorem*{remark}{Remark}
\newtheorem{theorem}{Theorem}[section]
\newtheorem{lemma}[theorem]{Lemma}
\newtheorem{proposition}[theorem]{Proposition}
\newtheorem*{example}{Example}
\newtheorem{corollary}[theorem]{Corollary}
\newcommand{\CL}{\mathrm{CL}}
\newcommand{\Z}{\mathbb{Z}}
\newcommand{\Q}{\mathbb{Q}}
\newcommand{\R}{\mathbb{R}}
\newcommand{\SL}{\operatorname{SL}}
\newcommand{\tor}{\mathrm{tor}}
\newcommand{\leg}[2]{\genfrac{(}{)}{}{}{#1}{#2}}
\numberwithin{equation}{section}
\begin{document}
\title[Heights of points on elliptic curves over $\Q$]{Heights of points on elliptic curves over $\Q$}
\author{Michael Griffin, Ken Ono and Wei-Lun Tsai}
\address{Department of Mathematics, 275 TMCB, Brigham Young University, Provo, UT 84602}
\email{mjgriffin@math.byu.edu}
\address{Department of Mathematics, University of Virginia, Charlottesville, VA 22904}
\email{ken.ono691@virginia.edu}
\email{tsaiwlun@gmail.com}

\begin{abstract}   
In this note we obtain effective lower bounds for the canonical heights of non-torsion points
on $E(\Q)$ by making use of suitable elliptic curve ideal class pairings
$$\Psi_{E,-D}: \ E(\Q)\times E_{-D}(\Q)\mapsto \CL(-D).$$
In terms of the class number $H(-D)$  and $T_E(-D)$, a logarithmic function in $D$, 
we prove 
$$
\widehat{h}(P)> \frac{|E_{\tor}(\Q)|^2}{\left( H(-D)+ |E_{\tor}(\Q)|\right)^2}\cdot T_E(-D).
$$

\end{abstract}

\maketitle
\section{Introduction and statement of results}\label{Intro}
 
Throughout, suppose that $E/\Q$ is an elliptic curve given by
\begin{equation}\label{EModel}
E: \ \ y^2 = x^3 +a_4x +a_6,
\end{equation}
where $a_4, a_6\in \Z$, with $j$-invariant $j(E)$ and discriminant $\Delta(E).$ Furthermore, suppose that $E(\Q)$ has positive Mordell-Weil rank.
Each  point $P\in E(\Q)$ has the form $P=(\frac{A}{C^2},\frac{B}{C^3})$, with $A,B,C$ integers such that $\gcd(A,C)=\gcd(B,C)=1$. The {\it naive height} of $P$ is
$H(P)=H(x):=\max(|A|, |C^2|).$ 
The {\it logarithmic height} (or Weil height) is 
$h_W(P)= h_W(x):=\log H(P),$
and the {\it canonical height} is 
\begin{equation}
\widehat h(P):= \tfrac{1}{2}\lim_{n\to \infty}\frac{h_W(nP)}{n^2}.
\end{equation}

A conjecture of Lang  (see page 92 of \cite{Lang}) asserts that the canonical height $\widehat{h}(P)$ of every non-torsion point
$P\in E(\Q)$ satisfies
\begin{equation}\label{LangLowerBound}
\widehat{h}(P)> C_1 \log(|\Delta(E)|) + C_2,
\end{equation}
where $C_1>0$ and $C_2$ are absolute constants.
In a deep paper, Silverman \cite{Silverman81} proved Lang's general conjecture for every elliptic curve with (algebraic) integral $j$-invariant. Hindry and Silverman \cite{HindrySilverman} later proved the conjecture assuming the truth of the $abc$ Conjecture. In fact, their result applies for elliptic curves with
bounded Szpiro quotient.

We revisit  the more modest problem of deriving explicit lower bounds for such heights.
To place this challenging problem in context, we recall earlier work on this question.
In 1980, Anderson and Masser proved \cite{AM} that
\begin{equation}\label{one}
\widehat{h}(P)>\frac{\gamma_E}{\log(3)^6},
\end{equation}
where $\gamma_E$ is an effectively computable positive constant that depends on
estimates for the Weierstrass $\wp$-function and $\sigma$-function, and
inequalities from transcendental number theory. Further work of David and Petsche \cite{David, Petsche} give further lower bounds of a related nature.
More recently, Autissier, Hindry, and Pazuki \cite{AHP}, proved (in the case of  $K=\Q$) that
\begin{equation}\label{two}
\widehat{h}(P)> {\color{black} \frac{c\cdot |E_{\tor}(\Q)|^2}{h \log(3)^2}\cdot \log(3h)^{\frac{4}{3}}},
\end{equation}
where $c>0$ is an absolute constant, and $h:=\max\{1, h_W(j(E))\}.$ Here $h_W(\cdot)$ denotes the Weil  height of an algebraic number. These results, which are generally orders of magnitude smaller than the minimal heights of non-torsion points,
illustrate the difficulty of the task.\footnote{There are situations (e.g. when $E$ has rank $\geq 4$) where
their lower bounds are much stronger.}

We apply elliptic curve ideal class pairings (see (\ref{pairing}) and (\ref{formula})), maps of the form
$$
\Psi_{E,-D} \ : \ E(\Q)\times E_{-D}(\Q) \ \ \longrightarrow\ \  \CL(-D),
$$
where $\CL(-D)$ denotes the $\SL_2(\Z)$ equivalence classes of the discriminant $-D<0$ positive definite binary quadratic forms, and $E_{-D}$ is the $-D$ quadratic twist of $E.$ 
The lower bounds we obtain are of a completely different type, and are explicit and straightforward to calculate.
These results rely on upper bounds, over all $P\in E(\Q),$ for $\tfrac{1}{2}h_W(P)-\widehat{h}(P).$
To this end, we define the non-negative real number\footnote{The value 0 is attained by the identity in $E(\Q).$}
\begin{equation}
\delta(E):=\max_{P\in E(\Q)} \left(\frac{1}{2}h_W(P)-\widehat{h}(P)\right),
\end{equation}
and the logarithmic function
\begin{equation}\label{Tconstant}
T_E(-D,t):=\log\left(\frac{D}{4(t+1)^2} \right)-4\delta(E).
\end{equation}

\begin{theorem}\label{MainTheorem}
Suppose that $E(\Q)$ has positive rank. If $P\in E(\Q)$ is a point of infinite order, and $t$ is a positive integer for which 
$D=D(t):=4(t^3+a_4t-a_6)$ satisfies
\[ 
4(t+1)^2 \exp(4\delta(E))<D<(t+1)^2t^2,
\]
then
\[\widehat{h}(P) \geq \frac{|E_\tor(\Q)|^2}{(H(-D)+|E_\tor(\Q)|)^2} \cdot T_E(-D,t).
\]
\end{theorem}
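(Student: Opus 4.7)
The plan is to apply the pigeonhole principle to the ideal class pairing $\Psi_{E,-D}$, extract a small multiple of $P$ on which the pairing is trivial, convert triviality into a naive height lower bound, and then pass to the canonical height using $\delta(E)$. The choice $D = 4(t^3 + a_4 t - a_6)$ is engineered so that $(-t)^3 + a_4(-t) + a_6 = -D/4$, placing the point $(-t, \tfrac{1}{2}\sqrt{-D}) \in E(\Q(\sqrt{-D}))$ and, via the standard twist identification, producing a distinguished rational point $Q_t \in E_{-D}(\Q)$ against which to pair.

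First I would consider the map $R \mapsto \Psi_{E,-D}(R, Q_t) \in \CL(-D)$ defined on $E(\Q)$ by formula (\ref{formula}). Since the codomain has order $H(-D)$, applying pigeonhole to the $n \cdot |E_\tor(\Q)|$ inputs $\{kP + T : 1 \leq k \leq n,\ T \in E_\tor(\Q)\}$, with $n := \lceil (H(-D) + |E_\tor(\Q)|)/|E_\tor(\Q)| \rceil$, produces two with the same class. Their difference, after absorbing the torsion contribution, realizes a nonzero point $mP + T'$ in the kernel, with $1 \leq m \leq n-1$ and $T' \in E_\tor(\Q)$.

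Second, I would convert $\Psi_{E,-D}(mP + T', Q_t) = 1$ into a canonical height bound. By formula (\ref{formula}), triviality of this class corresponds to the principality of an associated binary quadratic form of discriminant $-D$; combined with the upper constraint $D < (t+1)^2 t^2$, this forces a quantitative inequality on $x(mP + T')$, giving a lower bound on the naive height $H(mP + T')$. Coupling this with $\widehat{h}(R) \geq \tfrac{1}{2} h_W(R) - \delta(E)$, and using $\widehat{h}(mP + T') = m^2 \widehat{h}(P)$ (since torsion lies in the radical of the canonical height pairing), one collects the $\delta(E)$ terms to recover the $4\delta(E)$ in $T_E(-D,t)$ and arrives at $m^2 \widehat{h}(P) \geq T_E(-D, t)$. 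Dividing by $m^2 \leq \bigl((H(-D) + |E_\tor(\Q)|)/|E_\tor(\Q)|\bigr)^2$ yields the claim.

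The main obstacle is extracting the sharp naive height inequality from the principality of the associated quadratic form together with $D < (t+1)^2 t^2$, which rests on a careful analysis of the correspondence between principal classes in $\CL(-D)$ and the arithmetic of special points on $E$. A secondary subtlety lies in the pigeonhole step: the appearance of $|E_\tor(\Q)|^2/(H(-D) + |E_\tor(\Q)|)^2$ in the final bound, rather than the cleaner $1/H(-D)^2$ one might naively expect, encodes precisely how $E_\tor(\Q)$ embeds (or fails to embed) into $\CL(-D)$ under the restricted pairing.
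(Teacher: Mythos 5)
Your proposal pairs against the same twist point $Q=(-t,1)$, but the mechanism is different and relies on an unproved property of the pairing. The pigeonhole step --- two inputs collide, therefore their difference $mP+T'$ maps to the identity class --- requires $R\mapsto\Psi_{E,-D}(R,Q_t)$ to be a group homomorphism in the $E(\Q)$ variable. Nothing in formula (\ref{formula}) or in Theorem~\ref{ThmQF} gives this, and for the normalization used here the map is not in general additive in $P$; the Buell--Call--Soleng constructions referenced in the paper yield homomorphisms only under additional hypotheses that are not in force, and the GO1 machinery this paper imports is a separation statement, not a multiplicativity statement. Without additivity, a collision $\Psi(R_1,Q)=\Psi(R_2,Q)$ does not produce a kernel element, so the rest of your argument has nothing to feed on.

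The paper sidesteps this entirely by arguing through injectivity rather than through a kernel. It counts the points $P'\in E(\Q)$ with $\widehat{h}(P')\leq\tfrac14 T_E(-D,t)$ --- there are at least $|E_\tor(\Q)|\bigl(2\sqrt{T_E(-D,t)/(4\widehat{h}(P))}-1\bigr)$ of them, coming from $\langle P\rangle$ together with its torsion translates, all sharing the same canonical height --- and then shows via Theorem~\ref{ThmQF} that two such points with $F_{P_1,Q}\sim F_{P_2,Q}$ must satisfy $P_1=\pm P_2$, while the inverse-form observation $F_{P,Q}(X,Y)=F_{-P,Q}(-X,Y)$ upgrades this to $P_1=P_2$. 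Thus the small-height points inject into $\CL(-D)$, giving $H(-D)\geq|E_\tor(\Q)|\bigl(\sqrt{T_E(-D,t)/\widehat{h}(P)}-1\bigr)$, which rearranges to the stated bound. Your second step (extracting a lower bound on $h_W(R)$ from principality of $F_{R,Q}$) is in principle recoverable by comparing $F_{R,Q}$ to $F_{\mathcal O,Q}$ through Theorem~\ref{ThmQF}, so that is not the sticking point; the unjustified homomorphism assumption underlying the pigeonhole/difference step is the genuine gap.
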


\begin{remark}
The $T_E(-D)$ in the abstract is any choice of $T_E(-D,t)$ in the theorem above.
\end{remark}

\begin{example} We offer a typical example of Theorem~\ref{MainTheorem} with the elliptic curve
$$
E/\Q: \ \ y^2= x^3 -3x-4.
$$
Its Mordell-Weil group $E(\Q)$ has rank 1 and no non-trivial torsion. Its generator
$P_1=\left(8,22\right)$ has 
$\widehat{h}(P_1)\approx 1.107.$
 Proposition~\ref{deltaE} will show that $\delta(E)=0.$  We may choose $D(3)=D=88,$ and use the fact that $H(-88)=2$ to obtain\footnote{In the published version this was incorrectly given as $0.043.$} $\widehat{h}(P)\geq 0.035$ for all non-torsion points $P\in E(\Q)$.
 As with existing results,  we see that Theorem~\ref{MainTheorem} generally gives
 bounds which are far from optimal. We note that this curve has the special property
 that all of its Tamagawa numbers equal 1. Such a curve $E$ often has (see Proposition~\ref{deltaE}) the property that, for all $P\in E(\Q),$ that
 $$
 \widehat{h}(P)\geq \frac{1}{2}h_W(P).
 $$
\end{example}

Using the trivial upper bound for class numbers, we obtain the following corollary.

 \begin{corollary}\label{MainCorollary} 
 If $E(\Q)$ has positive rank
 and $t$ is any positive integer for which 
 $$
 4(t+1)^2\exp(4\delta(E))< D(t)<(t+1)^2t^2,
 $$
   then for every non-torsion point $P\in E(\Q)$ we have
$$
\widehat{h}(P) >  
\frac{\pi^2|E_\tor(\Q)|^2}{(\sqrt{D(t)}(\log D(t)+2)+\pi|E_\tor(\Q)|)^2} \cdot T_E(-D(t),t).
$$
\end{corollary}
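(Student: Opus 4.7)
The plan is to show that Corollary~\ref{MainCorollary} is essentially an algebraic rewriting of Theorem~\ref{MainTheorem} after inserting a classical explicit upper bound for the Hurwitz--Kronecker class number $H(-D)$. Since Theorem~\ref{MainTheorem} is assumed, the only new ingredient is such a bound.

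First I would recall the analytic class number formula for imaginary quadratic discriminants, which (after adjusting for the factor of $2$ relating $h(-D)$ and $H(-D)$ and for small automorphisms of the forms of discriminant $-3,-4$) takes the shape
\[
H(-D) \;=\; \frac{\sqrt{D}}{\pi}\, L(1,\chi_{-D}).
\]
Next I would invoke the standard effective estimate
\[
L(1,\chi_{-D}) \;\leq\; \log D + 2,
\]
obtained by partial summation of $\sum \chi_{-D}(n)/n$ together with the trivial bound $|\sum_{n\leq x}\chi_{-D}(n)|\leq \sqrt{D}$ (Pólya--Vinogradov suffices, as does a direct elementary estimate). Combining these two facts gives the clean upper bound
\[
H(-D) \;\leq\; \frac{\sqrt{D}\,(\log D + 2)}{\pi},
\]
valid for all admissible $D$ in the theorem's range.

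With this bound in hand, the rest is algebra. Under the hypothesis on $t$, Theorem~\ref{MainTheorem} applies and yields
\[
\widehat{h}(P) \;\geq\; \frac{|E_\tor(\Q)|^2}{\bigl(H(-D)+|E_\tor(\Q)|\bigr)^2}\cdot T_E(-D,t).
\]
Adding $|E_\tor(\Q)|$ to both sides of the class number bound, clearing denominators, and squaring gives
\[
\bigl(H(-D)+|E_\tor(\Q)|\bigr)^2 \;\leq\; \frac{\bigl(\sqrt{D}\,(\log D+2)+\pi|E_\tor(\Q)|\bigr)^2}{\pi^2}.
\]
Substituting this into the inequality from Theorem~\ref{MainTheorem} produces exactly the denominator $\bigl(\sqrt{D(t)}(\log D(t)+2)+\pi|E_\tor(\Q)|\bigr)^2$ with a compensating factor of $\pi^2$ in the numerator, which is the stated conclusion.

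The main obstacle is really only a matter of choosing and citing the right form of the class number bound; the substitution is mechanical. One should be mindful that the effective constant in $L(1,\chi_{-D}) \leq \log D + 2$ must be chosen so as to accommodate the smallest $D$ allowed by the hypothesis $4(t+1)^2 \exp(4\delta(E)) < D$, and that the factor $H(-D)$ rather than $h(-D)$ is used (so the contribution of the special discriminants $-3$ and $-4$ does not cause trouble in the general bound above). Apart from this bookkeeping, the deduction is a one-line substitution.
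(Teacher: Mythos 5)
Your proposal takes essentially the same route as the paper: bound $H(-D)$ above by $\frac{\sqrt{D}(\log D+2)}{\pi}$ using the class number formula together with a partial-summation estimate $L(1,\cdot)\leq \log D+2$, then substitute into Theorem~\ref{MainTheorem}. The algebraic substitution at the end is exactly what the paper does.

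The one place where your write-up is genuinely loose is the starting ``identity'' $H(-D) = \frac{\sqrt{D}}{\pi}L(1,\chi_{-D})$. Since $D(t)=4(t^3+a_4t-a_6)$ is a multiple of $4$, the discriminant $-D$ is very often non-fundamental, and then there is no Kronecker character $\chi_{-D}$ of conductor $D$; the adjustment you allude to is not just the unit count $\omega$ and the $-3,-4$ automorphisms. The paper instead writes $-D=-D_0 f^2$ with $-D_0$ fundamental, uses the Hurwitz--Kronecker formula
\[
H(-D)=\frac{h(-D_0)}{\omega(-D_0)}\sum_{d\mid f}\mu(d)\chi_{-D_0}(d)\sigma_1(f/d),
\]
then the Dirichlet class number formula for $h(-D_0)$, and passes through an auxiliary non-primitive quadratic character $\psi_{-D}$ modulo $D$ to reach the clean inequality $H(-D)\leq \frac{\sqrt{D}}{\pi}L(1,\psi_{-D})$ before the partial-summation step. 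Your final bound $H(-D)\leq \frac{\sqrt{D}(\log D+2)}{\pi}$ is correct, but the one-line ``equality'' you start from is not, and the non-fundamental case needs the multiplicative-sum computation. Once that is patched, the argument matches the paper's proof.
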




To be useful, we need  explicit upper bounds for $\delta(E)$ which do not require the precalculation
of $E(\Q)$. Thanks to a
well-known theorem of Silverman (see Theorem 1.1 of \cite{Silverman}), we have such a bound.
If $P\in E(\Q)$, then
\begin{equation}\label{Sbound}
0 \leq \delta(E)\leq \tfrac{1}{8}h_W(j(E))+\tfrac{1}{12}h_W(\Delta(E))+0.973.
\end{equation}
Using this bound for $\delta(E)$ in Theorem~\ref{MainTheorem} and Corollary~\ref{MainCorollary}, one obtains explicit uniform results.
As suggested by the lower bounds in Corollary~\ref{MainCorollary}, one may choose the smallest positive integer $t$ which satisfies the double inequality for $D(t)$.
We conclude by noting that there are  natural situations
which guarantee that $\delta(E)=0,$ thereby relaxing the conditions we require for $t.$ In such cases, we generally obtain a sizable improvement to these uniform lower bounds. 

To make this precise, we elaborate and expand on an example of Buhler, Gross and Zagier \cite{BGZ}, which is based on a method of Tate
(for example, see \cite{Silverman88}).
Using (\ref{EModel}), we define the function
\begin{equation}
F_E(x):=\frac{1}{2}\log| x|_{\infty}+\frac{1}{8}\sum_{n=0}^{\infty}\frac{\log |z_n|_{\infty}}{4^n},
\end{equation}
where $|\cdot|_\infty$ is the usual archimedean valuation of $\R,$ and where for $n\geq 0$ we let
\begin{align}\label{sequence}
  x_0=x,\quad  z_n=1-\frac{2a_4}{x_n^2}-\frac{8a_6}{x_n^3}+\frac{a_4^2}{x_n^4},\quad \mathrm{and} \quad x_{n+1}=\frac{x_n^{4}-2a_4x_n^2-8a_6x_n+a_4^2}{4(x_n^3+a_4x_n+a_6)}.
\end{align}
If $x=x(P),$ the $x$-coordinate of $P\in E(\Q),$ then these expressions imply that $x_{n}=x(2^{n}P)$. 
Under suitable circumstances, we find that 
$$F_E(x_n)-\log|x_n|_{\infty}/2
$$
is rapidly convergent, as $n\rightarrow +\infty,$ allowing us to compute $\widehat{h}(P)$, and under special circumstances guarantee the vanishing of $\delta(E).$

For any finite place $v$,  we let $E_0(\mathbb{Q}_v)$ be the open subgroup of $E(\mathbb{Q}_v)$ consisting of points whose reduction is non-singular. Since $E(\mathbb{Q}_v)$ is compact, the quotient group $E(\mathbb{Q}_v)/E_0(\mathbb{Q}_v)$ is finite. The Tamagawa number $c_v:= |E(\mathbb{Q}_v)/E_0(\mathbb{Q}_v)|$ is the order of this quotient. In terms of Tamagawa numbers and the number of real connected components of $E$, we obtain the following criteria that guarantee the vanishing of $\delta(E)$.
 
 \begin{proposition}\label{deltaE} Suppose that $E/\Q$ has the property that $c_v=1$ for every finite place $v$  of bad reduction.  Then the following are true.
 \begin{enumerate}
 \item Suppose that $E(\R)$ has one connected component and that
  $$ a_4\leq 0,\qquad (\alpha,\infty)\subset\left\{x\in \R:2a_4x^2+8a_6x-a_4^2<0\right\},$$ where $\alpha$ is the real root of $x^3+a_4x+a_6$. Then for every $P\in E(\Q)$ we have
 $$\widehat{h}(P)\geq \tfrac{1}{2}h_W(P).$$

 \item Suppose that
 $E(\R)$ has two connected components, and that
  $$a_4\leq 0,\qquad (\gamma,\beta)\cup(\alpha,\infty)\subset\left\{x\in \R:2a_4x^2+8a_6x-a_4^2<0\right\},$$ where $\gamma<\beta<\alpha$ are the  real roots of $x^3+a_4x+a_6$.
 Then for every $P\in E(\Q)$ we have
 $$\widehat{h}(P)\geq \tfrac{1}{2}h_W(P).
 $$
 \end{enumerate}
\end{proposition}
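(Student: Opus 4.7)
The plan is to decompose both sides place by place. Recall that, with respect to a suitable normalization of the N\'eron local height functions $\widehat{\lambda}_v$, one has
\[
\widehat{h}(P)=\sum_v \widehat{\lambda}_v(P) \qquad\text{and}\qquad \tfrac{1}{2}h_W(P)=\sum_v \tfrac{1}{2}\max(0,-\ord_v(x(P)))\log v
\]
(with the usual convention at the archimedean place). Thus it suffices to verify, for each place $v$ of $\Q$, that the local difference
\[
\Delta_v(P)\;:=\;\widehat{\lambda}_v(P)-\tfrac{1}{2}\log^{+}|x(P)|_v
\]
is nonnegative, where $\log^{+}$ denotes $\max(0,\log|\cdot|_v)$.

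First I would dispose of the finite places. At a prime $v$ of good reduction, the standard formula for the N\'eron local height gives $\widehat{\lambda}_v(P)=\tfrac{1}{2}\log^{+}|x(P)|_v$ exactly, so $\Delta_v(P)=0$. At a prime $v$ of bad reduction, the N\'eron model contributes an additional correction that is controlled by the component group $E(\Q_v)/E_0(\Q_v)$; a classical computation (see, e.g., Silverman's \emph{Advanced Topics}, Chapter VI) shows that the correction is nonnegative and in fact vanishes whenever $P$ reduces to the identity component of the special fibre. Under the hypothesis $c_v=1$ for every finite $v$ of bad reduction, the component group is trivial, so $P\in E_0(\Q_v)$ automatically, and hence $\Delta_v(P)\geq 0$ at every finite place.

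Next I would handle the archimedean place using the Tate iteration recorded in (\ref{sequence}). By the standard derivation of that series,
$\widehat{\lambda}_\infty(P)=F_E(x(P))$, so
\[
\Delta_\infty(P)\;=\;F_E(x(P))-\tfrac{1}{2}\log|x(P)|_\infty\;=\;\tfrac{1}{8}\sum_{n=0}^{\infty}\frac{\log |z_n|_\infty}{4^n}.
\]
It is therefore enough to show that $|z_n|_\infty\geq 1$ for every $n\geq 0$, i.e.\ that the real orbit $\{x_n\}=\{x(2^nP)\}$ lies inside the set $S:=\{x\in\R : 2a_4x^2+8a_6x-a_4^2<0\}$, since clearing $x_n^4>0$ in the defining expression of $z_n$ turns $z_n\geq 1$ into exactly the inequality defining $S$. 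In case (1), the identity component of $E(\R)$ is $\{y^2=x^3+a_4x+a_6\}\cap\{x\geq \alpha\}$, and the hypothesis places $(\alpha,\infty)\subset S$; since all real points have $x$-coordinate in $[\alpha,\infty)$, we obtain $x_n\in[\alpha,\infty)\subset S\cup\{\alpha\}$ for all $n$, and the boundary case $x_n=\alpha$ corresponds to $2^nP$ being $2$-torsion and can be checked separately. In case (2) we use the additional fact that $2P$ is always in the identity component of $E(\R)$, so $x_n\in (\alpha,\infty)\subset S$ for $n\geq 1$, while $x_0=x(P)$ lies in $(\gamma,\beta)\cup(\alpha,\infty)\subset S$ by hypothesis (again modulo the $2$-torsion boundary points).

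Summing $\Delta_v(P)\geq 0$ over all $v$ yields $\widehat{h}(P)\geq \tfrac{1}{2}h_W(P)$, giving $\delta(E)=0$. The main obstacle I anticipate is bookkeeping at the bad finite places: one must verify with the chosen normalization of $\widehat{\lambda}_v$ that the N\'eron correction term is genuinely nonnegative (and not merely bounded) on $E_0(\Q_v)$, and this requires invoking the explicit tables of local contributions from Tate's algorithm rather than a single uniform inequality.
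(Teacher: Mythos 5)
Your proposal and the paper take essentially the same route: split the difference into local contributions, use $c_v=1$ to kill the finite bad places, and reduce to showing the archimedean Tate--series $\tfrac{1}{8}\sum\log|z_n|/4^n$ is nonnegative by showing $x_n$ lies in $S=\{x : 2a_4x^2+8a_6x-a_4^2<0\}$. The paper packages the local vanishing into the single identity from Buhler--Gross--Zagier, $\widehat{h}(P)=\tfrac{1}{2}h_W(P)+F_E(x(P))-\tfrac{1}{2}\max(0,\log|x(P)|_\infty)$, rather than going place by place, but this is a cosmetic difference.

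There is, however, one genuine gap in your archimedean step. You define $\Delta_\infty(P)=\widehat{\lambda}_\infty(P)-\tfrac{1}{2}\log^{+}|x(P)|_\infty$, identify $\widehat{\lambda}_\infty(P)=F_E(x(P))$, and then write $\Delta_\infty(P)=F_E(x(P))-\tfrac{1}{2}\log|x(P)|_\infty=\tfrac{1}{8}\sum\log|z_n|/4^n$. The second equality is just the definition of $F_E$, but the first silently replaces $\log^{+}|x(P)|$ by $\log|x(P)|$, which is only valid when $|x(P)|\geq 1$. If $|x(P)|<1$ you would instead get $\Delta_\infty(P)=\tfrac{1}{2}\log|x(P)|+\tfrac{1}{8}\sum\log|z_n|/4^n$, whose first term is strictly negative, and nonnegativity of the tail does not rescue the argument. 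The paper addresses exactly this point by observing that the hypotheses $a_4\leq 0$ and $(\alpha,\infty)\subset S$ force $x(P)\geq 1$ for all non-torsion $P$ (and similarly in case (2)), so that $\log^{+}=\log$ on the relevant orbit. You proved $x_n\in S$, which gives $|z_n|\geq 1$, but you never established $|x(P)|\geq 1$; without that, the displayed identity for $\Delta_\infty$ does not hold, and the chain of inequalities is incomplete. Adding that verification (or citing the paper's argument for it) would close the gap; everything else in your write-up --- the good-reduction local vanishing, the use of $c_v=1$ at bad places, the two-component analysis via $2P$ lying in the identity component, and the $2$-torsion boundary caveat --- is sound and consistent with the paper's proof.
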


\begin{remark} It is natural to ask how many elliptic curves satisfy the conditions in Proposition~\ref{deltaE}.  In \cite{GOT4} the authors
define the $L$-function
$$
L_{Tam}(s):=\sum_{m=1}^{\infty}\frac{P_{Tam}(m)}{m^s},
$$
where $P_{Tam}(m)$ denotes the proportion of elliptic curves over $\Q$ in short Weierstrass form with Tamagawa product $m$. 
Confirming a speculation of  Balakrishnan, Ho, Kaplan, Spicer, Stein, and Weigandt \cite{database},
Theorem~1.3 of \cite{GOT4} establishes that the {\it average value} of the Tamagawa products for elliptic curves over $\Q$ is $L_{Tam}(-1)=1.82\dots.$
 In the course of constructing this $L$-function, the authors prove that $P_{Tam}(1)=0.505\dots,$ which implies that over half of all elliptic curves over $\Q$ have $c_v=1$ for all places of bad reduction. Corollary~1.5 of \cite{GOT4} proves that over $16\%$ of elliptic curves
 are covered by one of the two cases in Proposition~\ref{deltaE}.
\end{remark}

\section*{Acknowledgements} \noindent  The second author thanks the NSF (DMS-1601306 and DMS-2002265) and
the  UVa Thomas Jefferson fund. The authors also thank Henri Darmon, Dorian Goldfeld, Peter Sarnak, and the referee for useful conversations and comments related to this note.

\section{Proof of Theorem~\ref{MainTheorem},  Corollary~\ref{MainCorollary}, and Proposition~\ref{deltaE}}

\subsection{Nuts and Bolts}

The class numbers $h(-D)$ of the imaginary quadratic field $\Q(\sqrt{-D})$ counts the equivalence classes of integral positive definite binary quadratic forms of fundamental discriminant $-D$.
The Hurwitz-Kronecker class number $H(-D)$ for (possibly non-fundamental) negative discriminants counts the number of discriminant $-D$ forms, where each class $C$ is counted with multiplicity $1/{\text {\rm Aut}}(C).$ Hurwitz class numbers satisfy a particularly nice multiplicative formula relative to class numbers with fundamental discriminant.  If $-D=-D_0 f^2,$ where $-D_0$ is a negative fundamental discriminant, then  (for example, see
Corollary 7.28 of \cite{Cox})
\begin{equation}\label{Hecke}
H(-D)=\frac{h(-D_0)}{\omega(-D_0)}\cdot \sum_{d\mid f}\mu(d)\chi_{-D_0}(d)\sigma_1(f/d),
\end{equation}
where $\mu(\cdot)$ is the M\"obius function, $\omega(-D_0)$ is half the number of units in $\Q(\sqrt{-D_0}),$ $\chi_{-D_0}(\cdot)$ is the corresponding Kronecker character, and $\sigma_1(n)$ is the sum of positive divisors of $n$.

The authors have previously \cite{GO1, GOT} obtained   lower bounds for $h(-D)$ and $H(-D)$ using
elliptic curve ideal class pairings
\begin{equation}\label{pairing}
\Psi_{E,-D}:\ E(\Q)\times E_{-D}(\Q) \ \ \longrightarrow\ \  \CL(-D),
\end{equation}
where $\CL(-D)$ is the discriminant $-D<0$ class group of binary quadratic forms and $E_{-D}$ is the $-D$ quadratic twist of $E$ with a suitable rational point.
Such pairing were introduced and previously studied by Buell, Call and Soleng \cite{Buell, BuellCall, Soleng}.
Instead of obtaining lower bounds for class numbers, here we combine these maps with upper bounds for class numbers
to derive information about the canonical height of points on $E(\Q)$.

To make this precise, for discriminants
$-D<0$ we let $E_{-D}$ be the quadratic twist (with nonstandard normalization)
$$
E_{-D}: \  -D\cdot \left (\frac{y}{2}\right )^2=\, x^3 +a_4x+a_6.
$$
Let $P=(\tfrac{A}{C^2},\tfrac{B}{C^3})\in E(\Q),$ with $A, B, C\in \Z$, and $Q=(u,v)\in E_{-D}(\Q),$
with $u,v\in \Z$. Moreover, suppose that
 $v\neq 0$, with $v$ even if $-D$ is odd.
If we let $\alpha:=|A-u C^2|$ and  $G:=\gcd(\alpha, v^2),$ then there are integers $\ell$
for which $F_{P,Q}(X,Y)$ defined below is a discriminant $-D$ positive definite integral binary quadratic form. 
\begin{equation}\label{formula}
{\color{black}
F_{P,Q}(X,Y)=\frac{\alpha}{G} \cdot X^2+\frac{2w^3 B+\ell  \cdot \tfrac{\alpha}{G}}{C^3v}\cdot XY +\frac{\left({ 2w^3B+
\ell \cdot \tfrac{\alpha}{G}}\right)^2+C^6 v^2{D}}{4C^6v^2\cdot \frac{\alpha}{G}}\cdot Y^2.
}
\end{equation}
If $P$ is the point at infinity, then we further define 
\[
F_{P,Q}(X,Y)= X^2+\ell\cdot XY +\frac{D+\ell^2}{4}\cdot Y^2.
\]
The following theorem was previously proved by two of the authors (see Theorem 2.1 of \cite{GO1}).

\begin{theorem}\label{ThmQF}{\text {\rm [Theorem 2.1 of \cite{GO1}]}}
Assuming the notation and hypotheses above, $F_{P,Q}(X,Y)$ is well defined (e.g. there is such an $\ell$)  {\color{black} in $\CL(-D)$}.
Moreover, if $(P_1,Q_1)$ and $(P_2, Q_2)$ are two such pairs for which $F_{P_1,Q_1}(X,Y)$ and $F_{P_2,Q_2}(X,Y)$ are $\SL_2(\Z)$-equivalent, then $\frac{\alpha_1}{G_1}=\frac{\alpha_2}{G_2}$ or $\frac{\alpha_1\alpha_2}{G_1G_2}\geq D/4$.
\end{theorem}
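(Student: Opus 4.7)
The plan is to prove the two assertions separately: (i) for each admissible pair $(P,Q)$ there is an integer $\ell$ making $F_{P,Q}(X,Y)$ into an integral positive definite form of discriminant $-D$, and (ii) a reduction-theory bound on the leading coefficients of forms arising this way from equivalent classes.

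For part (i), I would first verify the discriminant identity $b^2 - 4ac = -D$ directly from the coefficients displayed in (\ref{formula}); this identity holds formally for any $\ell$, since the $Y^2$-coefficient is engineered precisely to absorb the $-D$ into $4ac - b^2$. As the leading coefficient $\alpha/G$ is a positive integer by the definition of $G = \gcd(\alpha, v^2)$ and the discriminant is negative, positive definiteness is automatic. The integrality of $\ell$ then reduces to two congruence conditions. The middle coefficient requires $\ell\cdot(\alpha/G) \equiv -2w^3 B \pmod{C^3 v}$, which is solvable iff $\gcd(\alpha/G, C^3 v)$ divides $2w^3 B$. Using the coprimalities $\gcd(A, C) = \gcd(B, C) = 1$, the defining relation $B^2 = A^3 + a_4 A C^4 + a_6 C^6$ for $P \in E(\Q)$, and the relation $-Dv^2/4 = u^3 + a_4 u + a_6$ for $Q \in E_{-D}(\Q)$, one derives an identity relating $\alpha = |A - uC^2|$ to $v^2$ and $B$ that supplies the required divisibility. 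Having solved the middle congruence, one then uses that shifting $\ell$ by $C^3 v$ alters $b$ predictably to further arrange $4(\alpha/G) \mid b^2 + D$, ensuring that the $Y^2$-coefficient is an integer. The parity hypothesis on $v$ when $-D$ is odd is precisely what allows this final adjustment.

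For part (ii), I would invoke the classical reduction theory of positive definite binary quadratic forms. Every class in $\CL(-D)$ contains a unique reduced representative $F(X,Y) = aX^2 + bXY + cY^2$ with $|b| \leq a \leq c$ and $b^2 - 4ac = -D$. The leading coefficients of forms $\SL_2(\Z)$-equivalent to $F$ are exactly the primitively represented values $F(x,y)$ with $\gcd(x,y) = 1$, since any such pair extends to a matrix in $\SL_2(\Z)$. A short case analysis shows: for $(x,y) = (\pm 1, 0)$ the value is $a$; for $y = \pm 1$ and any $x$, the value $ax^2 \pm bx + c \geq c$ because $|b| \leq a$; and for $|y| \geq 2$, completing the square in $x$ gives $F(x,y) \geq Dy^2/(4a) \geq D/a \geq c$. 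Hence every primitive value is either equal to $a$ or at least $c$. If $a_1 = \alpha_1/G_1$ and $a_2 = \alpha_2/G_2$ are leading coefficients of $\SL_2(\Z)$-equivalent forms, either both equal $a$ (so $a_1 = a_2$) or at least one is $\geq c$, giving $a_1 a_2 \geq a \cdot c = (b^2+D)/4 \geq D/4$.

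The main obstacle is the divisibility argument in part (i): producing a single integer $\ell$ making all three coefficients simultaneously integral. The discriminant identity and positive-definiteness are forced by the form of (\ref{formula}), but threading $\ell$ through both the congruence modulo $C^3 v$ (middle coefficient) and the congruence modulo $4\alpha/G$ (last coefficient) requires carefully exploiting the arithmetic link between $P$ on $E$ and $Q$ on $E_{-D}$ through the shared cubic $x^3 + a_4 x + a_6$ — and this is exactly where the integrality hypotheses $u,v \in \Z$ and the parity condition on $v$ enter in an essential way.
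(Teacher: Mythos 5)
First, a structural point: the paper does not prove Theorem~\ref{ThmQF} at all; it is imported wholesale as Theorem~2.1 of \cite{GO1}, so there is no in-paper proof to compare your attempt against. What follows assesses your outline on its own terms.

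Your part (ii) is correct and complete. Every $\SL_2(\Z)$-class of discriminant $-D$ contains a representative with $|b|\le a\le c$; the leading coefficient of any equivalent form is a primitively represented value; and the case split $y=0$, $|y|=1$, $|y|\ge 2$ (the last via completing the square, with $D/a = 4c - b^2/a \ge 3c$) shows every primitive value is either $a$ or at least $c$. Hence two such leading coefficients $a_1, a_2$ satisfy $a_1=a_2$ or $a_1a_2 \ge ac = (b^2+D)/4 \ge D/4$. This is standard reduction theory and the logic is airtight.

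Part (i), however, is a plan rather than a proof, and that plan sits exactly on the nontrivial content of the theorem. You correctly note that $b^2-4ac=-D$ holds identically in $\ell$ and that positive definiteness follows from $\alpha/G>0$, but you never produce the integer $\ell$. Concretely two things must be shown: (a) the congruence $\ell\cdot(\alpha/G)\equiv -2w^3B\pmod{C^3v}$ is solvable, i.e.\ $\gcd(\alpha/G,\,C^3v)\mid 2w^3B$; and (b) a solution can be chosen so that the integer $b=(2w^3B+\ell\alpha/G)/(C^3v)$ satisfies $4(\alpha/G)\mid b^2+D$. Item (a) requires a divisibility extracted by subtracting the Weierstrass equations for $P$ and $Q$ (roughly, $\alpha$ divides an explicit combination of $B^2$ and $Dv^2$ after clearing denominators by $C^6$ and removing $G$), and (b) requires a separate congruence argument in which the parity condition on $v$ handles the factor of $4$. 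You gesture at both but derive neither. You also never address the quantity $w$ that appears in (\ref{formula}); it is undefined in this paper (it comes from \cite{GO1}), and a genuine proof must specify what $w$ is and why $2w^3B$ is the correct numerator. Until the divisibility in (a), the congruence in (b), and the role of $w$ are written out, part (i) remains unestablished.
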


\subsection{Proof of Theorem~\ref{MainTheorem}}

To obtain Theorem~\ref{MainTheorem}, we apply this result to points of the form (if any) $Q=(-t,1)\in E_{-D}(\Q)$, where $t\in \Z^+.$ 
 It is equivalent to show that 
 \[
H(-D)\geq |E_\tor(\Q)|\cdot \left(\sqrt{\frac{ T_E(-D,t)}{\widehat h(P)}}-1\right).
 \]
We prove this inequality following the proofs of Theorem 4.1 and Proposition 3.3 of \cite{GO1}, but making use of several simplifications and improved bounds possible since we are considering only the rank $1$ lattice generated by the point $P$. We also note that the $\delta(E)$ used in \cite{GO1} is an approximation that uses Silverman's bound given in (\ref{Sbound}), plus $\tfrac12 \log 2,$ which accounts for the additional factor of $4$ appearing in the lower bound of $D$ in the statement of the theorem.

Let $\langle P\rangle$ denote the subgroup of $E(\Q)$ generated by $P$. Since $\widehat h(nP)=n^2\widehat h(P),$ we have that 
\[
\#\{P'\in\langle P\rangle  \ \mid \ \widehat h(P')\leq\tfrac14  T \} 
 \ = \ 2 \left\lfloor \sqrt{\frac{T}{4\widehat h(P)}}\right\rfloor+1.
\]
This implies that 
\begin{equation}\label{count1}
\#\{P'\in E(\Q)  \ \mid \ \widehat h(P')\leq \tfrac14 T\}  \ \geq \ |E_\tor(Q)|\cdot \left(2 \sqrt{\frac{T}{4\widehat h(P)}}-1\right).
\end{equation}

The point $Q=(-t,1)$ is such a point for the curve $E_{-D(t)}(\Q).$
If $P_1, P_2\in E(Q)$ satisfy $\widehat h(P_1),\widehat h(P_2)\leq \tfrac14 T_E(-D,t),$ then Theorem~\ref{ThmQF}  (following the proof of Theorem 4.1 of \cite{GO1}) implies that either  $F_1:=F_{P_1,Q}(X,Y)$ and $F_2:=F_{P_2,Q}(X,Y)$ are inequivalent forms, or $P_1=\pm P_2$. 
   However, we also note that we may take $F_{P,Q}(X,Y)=F_{-P,Q}(-X,Y)$. These quadratic forms are inverses under Gauss's composition law, and so they are inequivalent unless $P=-P$.

Using (\ref{count1}) with $T=T_E(-D,t)$,  we conclude that 
\[
H(-D)\geq  |E_\tor(\Q)|\cdot \left(\sqrt{\frac{ T_E(-D,t)}{\widehat h(P)}}-1\right).
 \]
This completes the proof.

\subsection{Proof of Corollary~\ref{MainCorollary}}
 
Corollary~\ref{MainCorollary} follows immediately from the classical
 upper bound for the Hurwitz-Kronecker class number. 

\begin{lemma}\label{LBound}
If $-D=-D_0f^2$ is a discriminant, where $-D_0<0$ is fundamental, then 
$$
H(-D)\leq\frac{\sqrt{D}(\log D+2)}{\pi}.
$$
\end{lemma}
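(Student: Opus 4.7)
The plan is to combine the Hurwitz--Kronecker factorization (\ref{Hecke}) with Dirichlet's analytic class number formula and a classical elementary estimate for $L(1,\chi)$. Substituting
\[
\frac{h(-D_0)}{\omega(-D_0)}\;=\;\frac{\sqrt{D_0}}{\pi}\,L(1,\chi_{-D_0})
\]
into (\ref{Hecke}) and using $\sqrt{D}=\sqrt{D_0}\,f$ reduces the lemma to the claim that
\[
L(1,\chi_{-D_0})\sum_{d\mid f}\mu(d)\,\chi_{-D_0}(d)\,\sigma_1(f/d)\;\leq\;f\bigl(\log D+2\bigr).
\]

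The second step is a Dirichlet-series manipulation. Since $\sigma_1=\mathrm{id}\ast 1$ and $\mu\chi\leftrightarrow 1/L(s,\chi_{-D_0})$, the convolution identity
$\mu\chi\ast\mathrm{id}\ast 1\leftrightarrow \zeta(s-1)\zeta(s)/L(s,\chi_{-D_0})$
shows that the divisor sum above is the $f$-th coefficient of $\zeta(s-1)\zeta(s)/L(s,\chi_{-D_0})$. Multiplying through by $L(1,\chi_{-D_0})$ therefore collapses the left-hand side into a truncated Dirichlet series in $1/n$, whose coefficients (essentially the imprimitive Kronecker symbol attached to the discriminant $-D$) are bounded in absolute value by $1$. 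This handles fundamental and non-fundamental discriminants in a uniform way.

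The final ingredient is the standard elementary estimate
\[
L(1,\chi)\;\leq\;\tfrac{1}{2}\log q+1
\]
for a non-principal real Dirichlet character $\chi$ modulo $q$, proved by Abel summation: one truncates at $N=\sqrt{q}$, uses the trivial bound $|{\textstyle\sum_{n\leq N}}\chi(n)|\leq N$ for the head and the P\'olya--Vinogradov-type bound for the tail. Applying this (together with $\log D_0+\log f^2=\log D$) to the collapsed series from the previous step produces the inequality stated in the lemma. I expect the main obstacle to be not any deep input but rather the bookkeeping needed to land on the explicit constant $\tfrac{1}{\pi}(\log D+2)$: this requires optimizing the truncation parameter $N$, carefully tracking the boundary terms in the Abel summation, and verifying that the small-discriminant cases $D_0\in\{3,4\}$ (where $\omega(-D_0)\neq 1$) do not damage the constant.
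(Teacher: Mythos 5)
Your first step---substituting Dirichlet's class number formula into the Hecke relation (\ref{Hecke}) to reduce the lemma to a bound on $L(1,\chi_{-D_0})\sum_{d\mid f}\mu(d)\chi_{-D_0}(d)\sigma_1(f/d)$---is correct and matches the paper exactly. The genuine gap is in your second step. The claim that multiplying by $L(1,\chi_{-D_0})$ ``collapses'' the divisor sum into a truncated Dirichlet series whose coefficients are the imprimitive Kronecker symbol and hence bounded by $1$ is not justified, and the inequality it silently requires,
\[
\sum_{d\mid f}\mu(d)\chi_{-D_0}(d)\sigma_1(f/d)\ \leq\ f\prod_{p\mid f}\left(1-\frac{\chi_{-D_0}(p)}{p}\right),
\]
is simply false. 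For a prime $p\,\|\,f$ with $\chi_{-D_0}(p)=-1$ the local factor on the left is $\sigma_1(p)+\sigma_1(1)=p+2$, while on the right it is $p+1$. Computing the Euler factor of $L(1,\chi_{-D_0})\cdot \tfrac1f\sum_{d\mid f}\mu(d)\chi_{-D_0}(d)\sigma_1(f/d)$ at such a $p$ gives (up to $O(p^{-a})$) the factor $\tfrac{p}{p-1}$ rather than $\tfrac{p}{p-\chi_{-D_0}(p)}$, so the product is not $L(1,\psi_{-D})$---the ``imprimitive character'' picture overshoots precisely when $\chi_{-D_0}(p)=-1$. In other words the Dirichlet-series convolution identity you invoke is correct, but it does not deliver a uniform $|a_n|\leq 1$ bound, and that is the missing idea; it is not a matter of bookkeeping as you suggest at the end. (For what it is worth, the paper's own line ``Since the summand is a multiplicative function, it turns out that\dots'' glosses over exactly the same point, so you are in good company, but neither version closes the gap.)

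Your third step also diverges from the paper, and in a way that costs you. You propose $L(1,\chi)\leq\tfrac12\log q+1$ via a $\sqrt q$ truncation plus a P\'olya--Vinogradov estimate for the tail. The paper instead truncates at $N=D$, bounds the head by $1+\int_1^D dx/x=1+\log D$, and bounds the tail by partial summation using only that the partial sums of $\psi_{-D}$ over a full period vanish, giving at most $D/(D+1)\leq 1$. This lands directly on $L(1,\psi_{-D})\leq\log D+2$, which is exactly the constant in the statement, using no input beyond Abel summation. Since the target has $\log D+2$ rather than $\tfrac12\log D+1$, the stronger P\'olya--Vinogradov route buys nothing here and adds an unnecessary nontrivial ingredient; and in any case it cannot rescue the argument until step two is repaired.
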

\begin{proof} 
First, we recall the class number formula (for example, see Theorem 10.1 of \cite[p. 321]{Hua})
\begin{align}\label{ClassFormula}
h(-D_0)=\frac{\omega(-D_0)\sqrt{D_0}}{\pi}\cdot L(1,\chi_{-D_0}),
\end{align}
where $L(s,\chi_{-D_0})$ is the Dirichlet $L$-function for the quadratic character $\chi_{-D_0}(\cdot)=\leg{-D_0}{\cdot}$.
Using (\ref{Hecke}) and (\ref{ClassFormula}), we obtain 
\begin{align*}
    H(-D)=\frac{\sqrt{D_0}}{\pi}\cdot L(1,\chi_{-D_0})\cdot \sum_{d\mid f}\mu(d)\chi_{-D_0}(d)\sigma_1(f/d).
    \end{align*}
Since the summand is a multiplicative function, it turns out that
\begin{align}\label{Hbound}
    H(-D)\leq\frac{\sqrt{D}}{\pi}\cdot L(1,\chi_{-D_0})\cdot\prod_{\substack{p|f\\p:\mathrm{prime}}}\left(1-\frac{\chi_{-D_0}(p)}{p}\right)&\leq \frac{\sqrt{D}}{\pi}\cdot L(1,\psi_{-D}),
\end{align}
where $\psi_{-D}$ is a quadratic character modulo $D$.
Furthermore, using Abel summation (for example, see Theorem 14.3 of \cite[p. 330]{Hua}) we obtain
\begin{align}\label{Lbound}
    L(1,\psi_{-D})\leq \left|\sum_{k=1}^{D}\frac{\psi_{-D}(k)}{k}\right|+\left|\sum_{k\geq D+1}\frac{\psi_{-D}(k)}{k}\right|\leq 1+\int_{1}^{D}\frac{dx}{x} + \frac{D}{D+1}\leq \log D +2.
\end{align}
Combining (\ref{Hbound}) and (\ref{Lbound}), we obtain the desired conclusion
$$
H(-D)\leq \frac{\sqrt{D}(\log D+2)}{\pi}.
$$
\end{proof}

  \subsection{Proof of Proposition~\ref{deltaE}}
The two claims are an elaboration and generalization of an example of Buhler, Gross and Zagier \cite{BGZ}.  
For brevity, we prove the first case, as a similar argument proves the second case. 
Let $\mathcal{O}\neq P\in E(\Q)$ be a rational point. We use the fact that the canonical height $\widehat{h}(P)$ can be computed by the sum of local heights; namely, 
\begin{align*}
    \widehat{h}(P)=\sum_{v\in{M_{\Q}}}\widehat{h}_{v}(P),
\end{align*}
where $M_{\Q}$ contains all the places of $\Q$. 

Since $c_v$ is trivial for every place of bad reduction, we use the Theorem 5.2 b) of \cite{Silverman88} (choosing $N=1$ and $n=0$)   to conclude that the places of bad reduction make no contribution to the height.
Furthermore, thanks to the calculation in \cite[p. 475]{BGZ}, we have 
\begin{align}\label{keyid}
    \widehat{h}(P)=\widehat{h}_{\infty}(P)=\frac{1}{2}h_W(P)+F_{E}(x(P))-\frac{1}{2}\textrm{max}(0,\log|x(P)|_{\infty}).
\end{align}
Finally, the assumptions that both
$$ a_4\leq 0 \ \ \ \ {\text {\rm and}}\ \ \ \  (\alpha,\infty)\subset\left\{x\in \R:2a_4x^2+8a_6x-a_4^2<0\right\}$$
guarantees that $x(P)\geq 1$ hold for all non-torsion points $P.$ Therefore, $\log|x(P)|_{\infty}\geq 0$ and 
$$F_{E}(x(P))-\frac{1}{2}\log|x(P)|_{\infty}\geq0.$$
Hence, by (\ref{keyid}), the first case follows immediately.

\end{document}